\documentclass[11pt]{article}
\usepackage{graphicx}
\usepackage{amsmath}

\usepackage{amsthm}
\newfont{\bb}{msbm10 at 11pt}
\def\r{\hbox{\bb R}}



\setlength{\textwidth}{15cm}
\setlength{\oddsidemargin}{1cm}
\setlength{\evensidemargin}{1cm}
\setlength{\textheight}{19.5cm}
\setlength{\parskip}{2mm}
\setlength{\parindent}{0em}
\setlength{\headsep}{1.5cm}

\newtheorem{theorem}{Theorem}[section]
\newtheorem{corollary}[theorem]{Corollary}

\newtheorem{lemma}[theorem]{Lemma}
\newtheorem{remark}[theorem]{Remark}


\begin{document}

\title{Constant mean curvature surfaces in Sol\\ with non-empty boundary}
\author{Rafael L\'opez\footnote{Partially supported by MEC-FEDER
 grant no. MTM2007-61775 and
Junta de Andaluc\'{\i}a grant no. P06-FQM-01642.}\\
Departamento de Geometr\'{\i}a y Topolog\'{\i}a\\ Universidad de Granada\\ 18071 Granada, Spain\\
email: rcamino@ugr.es}

\date{}
\maketitle

\begin{abstract} In homogenous space Sol we study compact surfaces  with constant mean curvature and with non-empty boundary. We ask how the geometry of the boundary curve  imposes restrictions over all possible configurations
that the surface can adopt.  We obtain a flux formula  and we establish  results that assert that, under some restrictions, the symmetry of the boundary is inherited  into  the surface.
\end{abstract}

\emph{MSC:}  53A10

\emph{Keywords:} Homogeneous space, mean curvature, flux formula,  Alexandrov reflection method

\section{Introduction}

The space Sol is a  simply connected  homogeneous 3-manifold whose isometry group has dimension $3$
and it belongs to one of the eight models of geometry of Thurston \cite{th}.  As a Riemannian manifold,
the space Sol  can be
represented by $\r^3$ equipped with the metric
$$\langle,\rangle=ds^2=e^{2z}dx^2+e^{-2z}dy^2+dz^2$$
 where $(x,y,z)$ are canonical coordinates of $\r^3$. The space Sol, with the group operation
 $$(x,y,z)\ast (x',y',z')=(x+e^{-z}x',y+e^{z}y',z+z'),$$
is a Lie group and the metric $ds^2$ is  left-invariant.

Although Sol is a homogenous space and the action of the isometry group is transitive, the fact that the number of isometries is low (for example, there are no rotations) makes that the knowledge of the geometry of submanifolds is far to be complete. For example, it is known the geodesics of space (\cite{tr}) and more recently, the totally umbilical surfaces (\cite{sot}), some properties on  surfaces with constant mean curvature (\cite{dm,il}) and invariant surfaces with constant curvature (\cite{lo}).

In this paper we consider compact surfaces with constant mean curvature (CMC) in Sol. First, we establish the following definition. Let $\Gamma$ be a closed curve of Sol and let $x:M\rightarrow Sol$ be an isometric immersion of a
compact surface $M$ with  non-empty boundary $\partial M$. We say that \emph{$\Gamma$ is the boundary of $x$}  if $x:\partial M\rightarrow\Gamma$ is a diffeomorphism. We also say that $\Gamma$ is the boundary of $M$ if we
the immersion is known.

We ask how the boundary of the surface has  influence on the geometry of the whole of the surface. The relationship between the geometry of a surface and the geometry of its boundary has been asked in other ambient spaces, specially, in  Euclidean space. A  natural question is if a CMC surface inherits the symmetries of its boundary. To be precise,  let $\Gamma$ be a closed curve and $\Phi$ an isometry of the ambient space  Sol such that $\Phi(\Gamma)=\Gamma$. If $M$ is a compact CMC surface
 with boundary $\Gamma$, do hold  $\Phi(M)=M$?

On the other hand, we ask  if there exists some type of restrictions for a CMC surface to be the boundary of  a given curve.  Our question formulates as follows: given a closed curve $\Gamma$ in Sol and $H\in\r$,
do any restrictions exist  of the possible values $H$ of the mean curvature of a compact surface in Sol bounded by $\Gamma$?

The space Sol is the space with less isometries among all homogenous spaces and as  a consequence  the hypothesis in our statements will be more restrictive. This will  reflect that the results obtained here are less outstanding than other ambient spaces. As we have pointed out, the isometry group Iso(Sol) has dimension $3$ and the component of the identity is generated by the following families of isometries:
\begin{eqnarray*}
& & T_{1,c}(x,y,z):=(x+c,y,z)\\
& &T_{2,c}(x,y,z):=(x,y+c,z)\\
& &T_{3,c}(x,y,z):=(e^{-c}x,e^{c}y,z+c),
\end{eqnarray*}
where $c\in\r$ is a real parameter. These isometries are left multiplications by elements of Sol and so, they are left-translations with respect to the structure of Lie group. Remark that the elements $T_{1,c}$ and $T_{2,c}$ are Euclidean translations along horizontal vector. Therefore,
Euclidean reflections in the $(x,y,z)$ coordinates with respect to a plane $P_t$ or $Q_t$ are isometries of Sol.
In the  problems posed in this work, by 'symmetry' we mean  invariant by one of the above two kinds of reflections.

The understanding of the geometry of Sol is given by the next three foliations:
\begin{eqnarray*}
& &{\cal F}_1: \{P_t=\{(t,y,z);y,z\in\r\}\}_{t\in\r}\\
& &{\cal F}_2: \{Q_t=\{(x,t,z);x,z\in\r\}\}_{t\in\r}\\
& &{\cal F}_3: \{R_t=\{(x,y,t);x,y\in\r\}\}_{t\in\r}.
\end{eqnarray*}
The foliations ${\cal F}_1$ and ${\cal F}_2$  are determined by the isometry groups $\{T_{1,c}\}_{c\in\r}$ and $\{T_{2,c}\}_{c\in\r}$ respectively, and they
describe (the only) totally geodesic surfaces of Sol, being  each leaf of the foliation isometric to a hyperbolic plane;  the foliation ${\cal F}_3$ realizes by minimal surfaces, all them isometric to  Euclidean plane.

In this work   the boundaries of our surfaces  lie in a totally geodesic surface or in a leaf of ${\cal F}_3$. After an isometry of the ambient space, a leaf of ${\cal F}_2$ can be carried into a leaf of ${\cal F}_1$
by using the isometry of Sol given by $\phi(x,y,z)=(y,x,-z)$, and hence one may assume that it is $P_0$ by a horizontal translation.
Similarly, any leaf $R_t$ of ${\cal F}_3$ carries to $R_0$ by taking the isometry $\phi(x,y,z)=(e^{t}x,e^{-t}y,z-t)$.
As a consequence, in this paper we consider that the boundary of the surface lies in the plane
$P:=P_0$ or $R:=R_0$.

In section \ref{s-flux} we establish the flux formula for compact CMC surfaces in Sol and we obtain upper bounds for the possible (constant) values of mean curvatures that a surface can take to be boundary of a given curve. In this sense,
we show we show (Corollary \ref{c-cir} and Theorem \ref{t-ere})
\begin{quote}{\it If $\Gamma$ is a circle of curvature $c$ contained in a totally geodesic plane and $M$ is a surface spanning $\Gamma$ with constant mean curvature $H$, then
$$|H|\leq \frac{c+1}{c-1}\frac{c+\sqrt{c^2-1}}{2}.$$
If $\Gamma$ is a circle of curvature $c$ contained in the plane $z=0$, then $|H|\leq \sqrt{c^2+1}$.}
\end{quote}
In section
\ref{s-emb} we apply the Alexandrov reflection method to obtain results that assures that  a CMC embedded surface inherits the symmetries of its boundary, as well as the possible configurations that a such surface can adopt. Finally, in section \ref{s-app} we combine the flux formula with the maximum principle in order to establish results of symmetry and uniqueness.

\section{The flux formula in Sol}\label{s-flux}

In this section we deduce a flux formula for CMC surfaces in Sol, which appears usually in the literature of surfaces with constant mean curvature. First, we  recall the Killing vectors fields in Sol (see details in  \cite{dm,tr}; see also the Appendix). With respect to the metric $ds^2$, an orthonormal basis of left-invariant vector fields is given by
$$E_1=e^{-z}\frac{\partial}{\partial x},\hspace*{.5cm}  E_2=e^{z}\frac{\partial}{\partial y},\hspace*{.5cm}  E_3=\frac{\partial}{\partial z}.$$
In the space Sol a basis of Killing vector fields is
$$\frac{\partial}{\partial x},\hspace*{.5cm} \frac{\partial}{\partial y},\hspace*{.5cm}  -x\frac{\partial}{\partial x}+y\frac{\partial}{\partial y}+\frac{\partial}{\partial z}.$$

We now establish the flux formula in Sol following the same steps than in Euclidean space (see originally in \cite{kks}).  Consider $M$ and $D$ two compact
surfaces immersed in Sol with $\partial M=\partial D$ such that
$M\cup D$ is
 an oriented cycle. Let $W$ be the oriented immersed domain in Sol bounded by $M\cup D$.
 Let $N$ and $\eta$ be the unit normal fields to $M$ and $D$ respectively that point inside $W$.
 If $X$ is a Killing vector field in Sol, and because the divergence of $X$ on $W$ is zero, the
 Divergence theorem asserts
\begin{equation}\label{eq0}
\int_M \langle N,X\rangle+\int_D\langle \eta,X\rangle=0,
\end{equation}
Assume now that $M$ is a compact surface of constant mean curvature $H$. Since $X$ is a Killing vector field, the first variation of area is zero. Consequently, we have
\begin{eqnarray*}
0&=&\int_M\mbox{div}_M(X)=\int_M\mbox{div}_M(X^\top)+\int_M\mbox{div}_M(X^\bot)=\\
&=&-\int_{\partial M}\langle \nu,X\rangle-2H\int_M \langle N,X\rangle,
\end{eqnarray*}
where $X^\top$ and $X^\bot$ denote the tangent and the normal components of $X$ with respect to $M$ and $\nu$ is the unit conormal to $M$ along $\partial M$ pointing inside $M$. By using (\ref{eq0}),  we have proved

\begin{lemma}[Flux formula]\label{l-flux} Let $X$ be a Killing vector field of Sol. Consider $M$ an immersed compact CMC surface in Sol and let $D$ be a compact surface  such that $\partial M=\partial D$ and $M\cup D$ is an oriented cycle. Then
\begin{equation}\label{eq3}
\int_{\partial M}\langle \nu,X\rangle=2H\int_D\langle \eta,X\rangle.
\end{equation}
\end{lemma}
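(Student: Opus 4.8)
The plan is to exploit two properties of a Killing field $X$ simultaneously: that it is divergence-free on the ambient Sol, so the ambient Divergence theorem applies on the domain $W$, and that its flow consists of isometries, so the induced deformation of $M$ preserves area and hence kills the first variation. The formula will emerge from combining an \emph{extrinsic} identity (on $W$) with an \emph{intrinsic} computation (on $M$).

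First I would record the ambient identity. Since $X$ is Killing, its covariant derivative is skew-symmetric and therefore $\mbox{div}(X)=0$ on Sol. Applying the Divergence theorem to $X$ on the immersed domain $W$ bounded by the cycle $M\cup D$, with $N$ and $\eta$ the unit normals pointing into $W$, gives
$$\int_M \langle N,X\rangle+\int_D\langle \eta,X\rangle=0.$$
This is the bridge that will later let me trade the integral over $M$ for one over the comparison surface $D$.

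Next I would work intrinsically on $M$. Decomposing $X=X^\top+X^\bot$ along $M$ and splitting the surface divergence accordingly, the tangential piece is handled by the divergence theorem on the compact surface $M$, producing the conormal boundary term $\int_M \mbox{div}_M(X^\top)=-\int_{\partial M}\langle \nu,X\rangle$ with $\nu$ the inward conormal. The normal piece needs the identity $\mbox{div}_M(X^\bot)=-2H\langle N,X\rangle$: writing $X^\bot=\langle X,N\rangle N$ and using that the tangential gradient of $\langle X,N\rangle$ is orthogonal to $N$ together with $\mbox{div}_M(N)=-\mbox{tr}(S)=-2H$, everything reduces to the constancy of $H$. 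Because $X$ is Killing the total surface divergence integrates to zero, yielding
$$0=-\int_{\partial M}\langle \nu,X\rangle-2H\int_M\langle N,X\rangle.$$
Substituting the ambient identity to eliminate $\int_M\langle N,X\rangle$ then gives the claimed equation.

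The main obstacle is not conceptual but one of sign and orientation bookkeeping: I must fix compatible conventions so that the normals $N,\eta$ inward to $W$, the inward conormal $\nu$ on $\partial M$, and the sign of $H$ in $\mbox{div}_M(N)=-2H$ all appear consistently. A single sign slip in the shape-operator identity or in the boundary term propagates directly to a wrong factor $2H$ in the conclusion. It is worth noting that no feature of the Sol metric beyond its being a Riemannian manifold admitting a Killing field enters this argument; the geometry of Sol becomes relevant only when one later chooses specific Killing fields from the basis above to extract quantitative consequences.
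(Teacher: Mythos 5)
Your proposal is correct and follows essentially the same route as the paper: the ambient Divergence theorem on the domain $W$ bounded by $M\cup D$ (using $\mbox{div}(X)=0$ for Killing $X$), combined with the vanishing of $\int_M\mbox{div}_M(X)$ via the splitting $X=X^\top+X^\bot$, where the tangential part gives the conormal boundary term and the normal part gives $-2H\int_M\langle N,X\rangle$. The only difference is that you spell out the identity $\mbox{div}_M(X^\bot)=-2H\langle N,X\rangle$, which the paper uses without comment.
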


Now we are going to put in the flux formula (\ref{eq3}) the Killing vector fields of Sol.

\begin{theorem}\label{t-flux} Let $\Gamma$ be a closed embedded curve included in the plane $P$. If $M$ is a
compact surface spanning $\Gamma$ and with constant mean curvature $H$, then
\begin{equation}\label{flux1}
|H|\leq \exp{\Big(\max_{p\in\Gamma}z(p)-\min_{p\in\Gamma}z(p)\Big)}\frac{L(\Gamma)}{2\ A(D)},
\end{equation}
where $D\subset P$ is the bounded domain by $\Gamma$ and $L(\Gamma)$ and $A(D)$ denote the length of $\Gamma$ and the area of $D$, respectively. Moreover, equality in (\ref{flux1})  holds if and only if
$\Gamma$ is a line of curvature of $M$ and $M$ is orthogonal to $P$ along $\partial M$.
\end{theorem}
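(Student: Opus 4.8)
The plan is to feed the horizontal Killing field $X=\partial/\partial x$ into the flux formula (\ref{eq3}) of Lemma \ref{l-flux}, taking as comparison surface the planar domain $D\subset P$ bounded by $\Gamma$. This field is the natural choice because $\partial/\partial x$ is orthogonal to the totally geodesic plane $P=\{x=0\}$, so the right-hand side of (\ref{eq3}) collapses to a multiple of the area of $D$, while the left-hand side is controlled by the pointwise norm of $X$ along $\Gamma$.

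First I would carry out the two elementary computations the formula requires. From the metric, $|X|^2=\langle\partial/\partial x,\partial/\partial x\rangle=e^{2z}$, so $|X|=e^{z}$. Since the induced metric on $P$ is $e^{-2z}dy^2+dz^2$, the unit normal to $P$ (hence to $D$) is $\eta=\pm E_1=\pm e^{-z}\partial/\partial x$, and the area element is $dA_D=e^{-z}\,dy\,dz$. Therefore $\langle\eta,X\rangle=\pm e^{z}$ has constant sign on $D$, and
\begin{equation*}
\int_D\langle\eta,X\rangle=\pm\int_D e^{z}e^{-z}\,dy\,dz=\pm\,A_e(D),
\end{equation*}
where $A_e(D)$ denotes the Euclidean area of $D$ in the $(y,z)$–plane. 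Combining this with the pointwise estimate $|\langle\nu,X\rangle|\le|X|=e^{z}$ along $\partial M$, the flux identity (\ref{eq3}) yields at once the sharp inequality $2|H|\,A_e(D)\le\int_\Gamma e^{z}\,ds$.

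It then remains to pass to the intrinsic quantities $L(\Gamma)$ and $A(D)$. Because $z$ is an affine coordinate and $\overline D$ is compact, the extrema of $z$ on $\overline D$ are attained on $\Gamma$, so $\min_\Gamma z\le z\le\max_\Gamma z$ throughout $D$. Estimating the numerator by $\int_\Gamma e^{z}\,ds\le e^{\max_\Gamma z}L(\Gamma)$ and the denominator by $A_e(D)=\int_D e^{z}\,dA_D\ge e^{\min_\Gamma z}A(D)$, and inserting both bounds into the sharp inequality, produces exactly (\ref{flux1}).

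For the equality case I would trace back through the estimates, the crucial one being the geometric step $|\langle\nu,X\rangle|\le|X|$. This is an equality at every point of $\partial M$, with a fixed sign, precisely when $\nu=\pm E_1$; since $\nu$ is tangent to $M$ and normal to $P$, this is exactly the condition that $M$ meet $P$ orthogonally along $\partial M$. Granting this, I would run a Joachimsthal-type argument: differentiating $\langle N,\eta\rangle$ along $\Gamma$ and using that $P$ is totally geodesic, so $\nabla_{\gamma'}\eta=0$, forces $\langle S_M\gamma',\eta\rangle=0$; writing $S_M\gamma'$ in the basis $\{\gamma',\nu\}$ of $T_pM$ and recalling $\nu=\pm\eta$ then gives $S_M\gamma'\parallel\gamma'$, i.e. $\Gamma$ is a line of curvature of $M$. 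The main obstacle I anticipate is precisely this equality analysis rather than the inequality itself: one must verify that the Joachimsthal computation goes through with the Levi-Civita connection of Sol (it does, since it uses only constancy of the contact angle and the vanishing of the shape operator of $P$), and one must take care in identifying which of the several estimates carries the genuine geometric content of the equality, namely the conormal condition $\nu=\pm E_1$.
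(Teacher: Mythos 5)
Your proposal is correct and follows essentially the same route as the paper: the same Killing field $\partial/\partial x$ in the flux formula, the same two pointwise estimates $|\langle\nu,X\rangle|\le e^{z}\le e^{\max_\Gamma z}$ and $\langle\eta,X\rangle=\pm e^{z}$ with $e^{z}\ge e^{\min_\Gamma z}$ on $D$, and the same identification of the Cauchy--Schwarz step $\nu=\pm E_1$ as the source of the equality case. Your Joachimsthal argument via $\nabla_{\gamma'}\eta=0$ (total geodesy of $P$) is just an invariant rephrasing of the paper's explicit computation $\nabla_{\alpha'}\tfrac{\partial}{\partial x}=z'e^{z}E_1$ using the connection formulas, and your passage through the Euclidean area $A_e(D)$ is an equivalent bookkeeping of the paper's bound on $\int_D e^{z}$.
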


\begin{proof} Take $X=\frac{\partial}{\partial x}$ in (\ref{eq3}). First, $\eta=\pm e^{-z}\frac{\partial}{\partial x}=E_1$. Both sides in (\ref{eq3}) yields
$$\int_{\partial M}\langle\nu,\frac{\partial}{\partial x}\rangle\leq \int_{\partial M}\Big|\frac{\partial}{\partial x}\Big|=
\int_{\partial M}e^{z}\leq \exp{(\max_{p\in\Gamma}z(p))}L(\Gamma).$$
$$\Big|2H\int_D\langle\eta,\frac{\partial}{\partial x}\rangle\Big|= 2|H|\int_D e^{z}\geq
2|H| \exp{(\min_{p\in\Gamma}z(p))} A(D).$$
This shows (\ref{flux1}).

In the case that the equality holds in (\ref{flux1}), one concludes  that
$\nu$ is proportional to $\frac{\partial}{\partial x}$, and then $\nu=E_1$. As $\langle N,\frac{\partial}{\partial x}\rangle=0$ along $\partial M$, we have ${\alpha'}\langle N,\frac{\partial}{\partial x}\rangle=0$ on $\partial M$, where $\alpha'$ is a unit tangent vector to $\partial M$. Then
$$\langle\nabla_{\alpha'}N,\frac{\partial}{\partial x}\rangle+\langle N,\nabla_{\alpha'}\frac{\partial}{\partial x}\rangle=0\hspace*{.5cm}\mbox{along $\partial M$}.$$
Parametrize $\Gamma$ as $\alpha(s)=(0,y(s),z(s))$. Using (\ref{cov}), we know
$$\nabla_{\alpha'}\frac{\partial}{\partial x}=z'e^z E_1\Rightarrow
\langle N,\nabla_{\alpha'}\frac{\partial}{\partial x}\rangle=z'e^z\langle N,E_1\rangle=0\hspace*{.5cm}\mbox{along $\partial M$}.$$
Thus
$$\langle\nabla_{\alpha'}N,\nu\rangle=\langle\nabla_{\alpha'}N,e^{-z}\frac{\partial}{\partial x}\rangle=0\hspace*{.5cm}\mbox{along $\partial M$}.$$
This means that the geodesic curvature of $\Gamma$ in $M$ is zero, and so, $\Gamma$ is a line of curvature of $M$. The orthogonality between $M$ and $P$ is a consequence of $\langle N,E_1\rangle=0$.
\end{proof}

 It is known that the plane $P$ with the induced metric of Sol is isometric to a hyperbolic plane: it suffices the change $t=e^z$ and then $P\rightarrow \{(y,t)\in\r^2;t>0\}$ and the metric is $(dy^2+dt^2)/t^2$. With this change of variables the expression $\exp{\Big(\max_{p\in\Gamma}z(p)-\min_{p\in\Gamma}z(p)\Big)}$ converts into
$$\frac{\max_{p\in\Gamma}t(p)}{\min_{p\in\Gamma}t(p)}.$$
In the particular case that $\Gamma$ is a curve of constant (intrinsic) curvature $\kappa=c$, then $\Gamma$ is a circle of hyperbolic plane (remark that  $c$ must be greater than $1$, which assures that the curve is closed). The quantities that appear in (\ref{flux1}) are
$$\frac{\max_{p\in\Gamma}t(p)}{\min_{t\in\Gamma}t(p)}=\frac{c+1}{c-1},\ \ L(\Gamma)=\frac{2\pi}{\sqrt{c^2-1}},\ \ A(D)=
\frac{2\pi(c-\sqrt{c^2-1})}{\sqrt{c^2-1}}.$$
See \cite{br}. Thus

\begin{corollary}\label{c-cir}  Let $\Gamma$ be a circle of curvature $c$  included in the plane $P$. If $M$ is a compact  surface spanning $\Gamma$ and with constant mean curvature $H$, then
\begin{equation}\label{flux11}
|H|\leq \frac{c+1}{c-1}\frac{c+\sqrt{c^2-1}}{2}.
\end{equation}
\end{corollary}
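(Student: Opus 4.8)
The plan is to specialize the general flux estimate of Theorem \ref{t-flux} to the case in which $\Gamma$ is a geodesic circle of the hyperbolic plane $P$. Since $P$ with its induced metric is isometric to the upper half-plane model via the change $t=e^z$, every quantity on the right-hand side of (\ref{flux1}) has an explicit closed form in terms of the intrinsic curvature $c$, and the corollary will follow by substitution and a short algebraic simplification.

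First I would record the three ingredients, all of which are the standard data of a circle of curvature $c>1$ in $\h^2$ (see the discussion preceding the statement and \cite{br}): the ratio $\max t/\min t=(c+1)/(c-1)$, which under $t=e^z$ equals the factor $\exp(\max z-\min z)$; the length $L(\Gamma)=2\pi/\sqrt{c^2-1}$; and the enclosed area $A(D)=2\pi(c-\sqrt{c^2-1})/\sqrt{c^2-1}$. Substituting these into (\ref{flux1}) gives
$$|H|\leq \frac{c+1}{c-1}\cdot\frac{1}{2}\cdot\frac{2\pi/\sqrt{c^2-1}}{2\pi(c-\sqrt{c^2-1})/\sqrt{c^2-1}}=\frac{c+1}{c-1}\cdot\frac{1}{2(c-\sqrt{c^2-1})},$$
where the common factor $2\pi/\sqrt{c^2-1}$ cancels between the length and the area.

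The only nontrivial manipulation is then to rationalize the last denominator: multiplying numerator and denominator by $c+\sqrt{c^2-1}$ and using $c^2-(c^2-1)=1$ shows that $1/(c-\sqrt{c^2-1})=c+\sqrt{c^2-1}$, which turns the bound into the asserted inequality (\ref{flux11}). There is no genuine obstacle beyond correctly importing the length and area formulas for a hyperbolic circle; the main point I would verify carefully is that $t=e^z$ is indeed an isometry of $P$ onto the half-plane model, so that intrinsic curvature, length, and area are all preserved, and that this change converts the exponential factor of (\ref{flux1}) into the ratio $\max t/\min t$ exactly as claimed.
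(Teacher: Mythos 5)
Your proposal is correct and coincides with the paper's own argument: the paper likewise substitutes the hyperbolic-circle data $\exp(\max z-\min z)=\frac{c+1}{c-1}$, $L(\Gamma)=\frac{2\pi}{\sqrt{c^2-1}}$, and $A(D)=\frac{2\pi(c-\sqrt{c^2-1})}{\sqrt{c^2-1}}$ (via the isometry $t=e^z$ and the formulas cited from \cite{br}) into the flux inequality (\ref{flux1}) of Theorem \ref{t-flux}. Your rationalization step $1/(c-\sqrt{c^2-1})=c+\sqrt{c^2-1}$ is exactly the simplification implicit in the paper's statement, so nothing is missing.
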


 If  $\Gamma$ is a circle of curvature $c$ and we have equality in (\ref{flux1}) we obtain that the normal curvature of $\Gamma$ is
$c$. However, from (\ref{flux11}) we do not conclude that $\Gamma$ is a set of umbilical points, since
equation $H=c$ has no solution in the interval $(1,\infty)$. This contrasts to the Euclidean case, where if the boundary curve is a circle of  curvature $c$, the flux formula says $|H|\leq c$ and the equality occurs if $M$ is an umbilical surface, that is, $M$ is a hemisphere (\cite{be}).

\begin{theorem}\label{t-ere} Let $\Gamma$ be a closed embedded curve included in the plane $R$. If $M$ is a compact surface spanning $\Gamma$ and with constant mean curvature $H$, then
\begin{equation}\label{flux2}
|H|\leq \frac{\int_{\partial M}\sqrt{1+x^2+y^2}\ ds}{2 A(D)}.
\end{equation}
In particular, if $\Gamma$ is a circle of curvature $c$ we have $|H|\leq \sqrt{c^2+1}$.
\end{theorem}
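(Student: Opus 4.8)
The plan is to run the same flux argument as in Theorem~\ref{t-flux}, but replacing the horizontal Killing field $\partial/\partial x$ by the third field of the list, $K:=-x\frac{\partial}{\partial x}+y\frac{\partial}{\partial y}+\frac{\partial}{\partial z}$, and taking for $D$ the bounded planar region of $R$ enclosed by $\Gamma$. Since $\Gamma\subset R$ we have $\partial M=\partial D=\Gamma$ and $M\cup D$ is an oriented cycle, so Lemma~\ref{l-flux} applies and gives $\int_{\partial M}\langle\nu,K\rangle=2H\int_D\langle\eta,K\rangle$. The remaining work is to evaluate the right-hand side exactly and to estimate the left-hand side, just as in the proof of (\ref{flux1}).

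First I would compute the right-hand side. The plane $R=\{z=0\}$ is tangent to $E_1,E_2$, so its unit normal is $\eta=\pm E_3=\pm\frac{\partial}{\partial z}$; because the metric is diagonal and $\langle\partial/\partial z,\partial/\partial z\rangle=1$, the only surviving term is $\langle\eta,K\rangle=\pm1$ at every point of $R$. As the induced metric on $R$ is the Euclidean $dx^2+dy^2$, its area element is $dx\,dy$ and hence $\int_D\langle\eta,K\rangle=\pm A(D)$. For the left-hand side, since $\nu$ is unit, Cauchy--Schwarz gives $|\langle\nu,K\rangle|\le|K|$, and a direct computation yields $|K|^2=e^{2z}x^2+e^{-2z}y^2+1$, which on $R$ reduces to $1+x^2+y^2$. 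Feeding these two facts into the flux identity produces $2|H|\,A(D)\le\int_{\partial M}\sqrt{1+x^2+y^2}\,ds$, which is exactly (\ref{flux2}).

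For the assertion about circles, note that $R$ with the induced metric is flat, so a curve of constant curvature $c$ is an ordinary round circle of radius $r=1/c$. Here lies the one point that requires care: the bound (\ref{flux2}) is not translation invariant, since its right-hand side depends on the position of $\Gamma$ through $x^2+y^2$, and a naive substitution does not yield the clean constant $\sqrt{c^2+1}$. The remedy is to normalise the circle first: the horizontal translations in the families $\{T_{1,c}\}_{c\in\r}$ and $\{T_{2,c}\}_{c\in\r}$ are isometries of Sol preserving $R$ and the constant mean curvature, and composing suitable ones carries the centre of $\Gamma$ to the origin. After this reduction $x^2+y^2\equiv r^2$ along $\Gamma$, so using $L(\Gamma)=2\pi r$ and $A(D)=\pi r^2$ the right-hand side of (\ref{flux2}) becomes $\frac{\sqrt{1+r^2}\,(2\pi r)}{2\pi r^2}=\frac{\sqrt{1+r^2}}{r}=\sqrt{c^2+1}$, as claimed. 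The main obstacle is thus not analytic but this geometric normalisation step; the rest is a routine evaluation of inner products on $R$.
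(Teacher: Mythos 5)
Your proposal is correct and follows essentially the same route as the paper: the flux formula of Lemma~\ref{l-flux} applied with the Killing field $X=-x\frac{\partial}{\partial x}+y\frac{\partial}{\partial y}+\frac{\partial}{\partial z}$, the observation that $\eta=\pm E_3$ gives $\langle\eta,X\rangle=\pm 1$ so the right-hand side is $\pm 2H A(D)$, and the Cauchy--Schwarz bound $|\langle\nu,X\rangle|\leq|X|=\sqrt{1+x^2+y^2}$ on $R$. The paper's own proof is even terser and leaves the circle case entirely implicit; the translation-normalisation of the centre to the origin (legitimate since $T_{1,c}$ and $T_{2,c}$ are isometries preserving $R$) that you single out is exactly the detail needed to get $\sqrt{c^2+1}$, so your write-up is, if anything, more complete.
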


\begin{proof}
Now we choose as  Killing vector field  $X= -x\frac{\partial}{\partial x}+y\frac{\partial}{\partial y}+\frac{\partial}{\partial z}$. The unit normal vector to $D$ is $\eta=E_3$ and $\langle \eta,X\rangle=1$. On the other hand,  $|\langle\nu,X\rangle|\leq |X|=\sqrt{1+x^2+y^2}$ .

\end{proof}

In particular, the inequalities (\ref{flux1}) and (\ref{flux2}) answer to the question posed in Introduction about
the possible values of (constant) mean curvatures of surfaces spanning a given curve: these values are not arbitrary, since the right-hand sides in both inequalities do not depend on the surface $M$ but the boundary curve $\Gamma$. In fact, we have upper bounds of $H$, which only depend  on the
geometry of the boundary curve $\Gamma$.

\section{The Alexandrov reflection method in Sol}\label{s-emb}

In the theory of surfaces with constant mean curvature, the maximum principle plays an important role.
The maximum principle which says that if $M_1$ and $M_2$ are two surfaces tangent at some point with the mean curvature vectors oriented in the same direction and having the same constant mean curvature, then if $M_1$ lies to one side of $M_2$, the surface $M_1$ must coincide with $M_2$ in an open set.

As a consequence, if $M$ is a minimal surface in Sol with boundary $\Gamma$ included in the plane $P$,  the maximum principle asserts that $M\subset P$: it suffices to compare the surface $M$ with a plane $P_t$ at the highest and lowest points of $M$. For this reason, minimal surfaces of Sol with boundary in $P$ are  domains of $P$. Similar result is obtained  if $\Gamma$ is included in $R$.  The same occurs in Euclidean space. However, if $\Gamma$ is not a planar curve, the result is very different in both ambient spaces. In Euclidean space, and comparing with any plane, we obtain that a minimal surface spanning $\Gamma$ is contained in the convex hull of $\Gamma$. In contrast, a minimal surface in Sol bounded by $\Gamma$ lies in the convex hull of $\Gamma$ formed \emph{only} by planes of the families ${\cal F}_i$, $1\leq i\leq 3$.

Other consequence of the maximum principle is  the Alexandrov reflection method, which   appears in the literature as a powerful technique in the study of symmetries of a CMC surface,
specially if the surface is embedded.  Using this technique,
 the very Alexandrov showed that round spheres are the only closed (compact and without boundary) CMC surfaces which are embedded in Euclidean space (\cite{al}).

The Alexandrov reflection method  consists into consider a foliation of the space by  totally geodesic surfaces
   and by a process of reflection   and comparison, together the maximum principle, one shows that there exists a symmetry of the surface. In space Sol this can  do by  reflections across the planes $P_t$ and $Q_t$. For example, a closed embedded CMC surface in Sol is topologically a sphere \cite{dm}.

   Before continuing,  we define a symmetric bigraph in Sol. A closed curve $\Gamma$ included in the plane $P$ is said
 a \emph{symmetric bigraph} (with respect to the $y$-direction) if $\Gamma$ is symmetric with respect to the reflection across
 the plane $Q$ and each one of the two components of $\Gamma$ divided by  $Q$   is a graph on $l:=P\cap Q$. A direct consequence of the Alexandrov reflection method is the following result:

\begin{theorem}\label{t-alex} Let $M$ be a compact embedded CMC surface in Sol with  boundary  included in  $P$. Assume
\begin{enumerate}
\item $M$ lies in one side of $P$.
\item The boundary $\partial M$ is a symmetric bigraph.
\end{enumerate}
Then $M$ is invariant by the reflections across to the plane $Q$. Moreover $Q$ divides $M$ in two symmetric graphs over some domain of the plane $Q$.
\end{theorem}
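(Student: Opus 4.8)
The plan is to run the Alexandrov moving-plane (reflection) method using the foliation $\{Q_t\}_{t\in\r}$ of Sol by the totally geodesic leaves of ${\cal F}_2$. Denote by $\sigma_t$ the reflection across $Q_t$, i.e. $\sigma_t(x,y,z)=(x,2t-y,z)$; as remarked in the Introduction each $\sigma_t$ is an isometry of Sol, so it preserves the mean curvature, and it fixes the plane $P$ setwise since it does not move the $x$-coordinate. Write $M^+(t)=M\cap\{y\ge t\}$ and $M^-(t)=M\cap\{y\le t\}$. As $M$ is compact, $y$ attains a maximum $y_{\max}$ on $M$, and for $t>y_{\max}$ the leaf $Q_t$ misses $M$. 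I would let $t$ decrease from $y_{\max}$ and, for $t$ below $y_{\max}$, reflect the cap $M^+(t)$ to $\sigma_t(M^+(t))\subset\{y\le t\}$, comparing it with $M^-(t)$. Hypothesis (1), that $M$ lies on one side of $P$, is what guarantees that $M$ together with the planar domain $D\subset P$ bounded by $\Gamma$ encloses a well-defined region, so that the notions of ``one side'' and the comparisons below are meaningful.

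First I would check that the reflection \emph{starts}: for $t$ slightly less than $y_{\max}$ the cap $M^+(t)$ is a small graph in the $y$-direction over a domain of $Q_t$, so $\sigma_t(M^+(t))$ is an embedded cap lying strictly on the $M^-(t)$ side. I then decrease $t$ and let $t_0$ be the first value at which either (a) $\sigma_t(M^+(t))$ becomes internally tangent to $M^-(t)$ at an interior point off $Q_t$, (b) it meets $M^-(t)$ tangentially at a common boundary point (with matching conormals), or (c) $M$ becomes tangent to the mirror leaf $Q_t$ at a point of $M\cap Q_t$, so that the cap ceases to be a graph over $Q_t$. In cases (a) and (c) the interior maximum principle (recalled at the start of this section), and in case (b) its boundary version (the Hopf boundary-point lemma for CMC surfaces), forces $\sigma_{t_0}(M^+(t_0))$ and $M^-(t_0)$ to coincide on an open set; since CMC surfaces in Sol are real-analytic and $M$ is connected, this yields a global symmetry of $M$ across $Q_{t_0}$.

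The crux is to identify $t_0=0$. I claim no first contact can occur for $t_0\in(0,y_{\max})$: a symmetry $\sigma_{t_0}(M)=M$ would force $\partial M=\Gamma\subset P$ to be invariant under $\sigma_{t_0}$ as well. But by hypothesis (2) $\Gamma$ is already invariant under $\sigma_0$, and invariance under both $\sigma_0$ and $\sigma_{t_0}$ makes $\Gamma$ invariant under $\sigma_0\circ\sigma_{t_0}$, which is the non-trivial translation $T_{2,-2t_0}$; a compact curve cannot be invariant under a non-trivial $y$-translation. This contradiction shows the reflection proceeds for all $t\in(0,y_{\max})$ with $\sigma_t(M^+(t))$ strictly on the $M^-(t)$ side, and, letting $t\to0^+$, that $\sigma_0(M^+(0))$ lies weakly on the $\{y\le0\}$ side of $M^-(0)$.

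Finally, at the level $t=0$ the two caps match along their boundaries: by hypothesis (2), $\sigma_0$ carries $\Gamma\cap\{y\ge0\}$ onto $\Gamma\cap\{y\le0\}$ and fixes the cut $M\cap Q$, so $\sigma_0(M^+(0))$ and $M^-(0)$ are CMC surfaces with the same $H$, the same boundary $(\Gamma\cap\{y\le0\})\cup(M\cap Q)$, and one lying on one side of the other. The maximum principle (interior together with its boundary version) then gives $\sigma_0(M^+(0))=M^-(0)$, i.e. $M$ is invariant under the reflection across $Q$. Moreover, since the reflection descended \emph{monotonically} to $t=0$ without interior tangency, each line parallel to $\partial/\partial y$ meets $M\cap\{y\ge0\}$ at most once (a double intersection at $y_1<y_2$ would produce contact at the level $t=(y_1+y_2)/2\in(0,y_{\max})$); hence $M\cap\{y\ge0\}$ is a graph over a domain $\Omega\subset Q$ and $M\cap\{y\le0\}=\sigma_0(M\cap\{y\ge0\})$ is its symmetric graph, which is the asserted conclusion. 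The step I expect to be the main obstacle is the bookkeeping at the boundary: initiating the reflection, verifying the first-contact alternatives (a)--(c) are exhaustive, and checking that positions and conormals of $\sigma_{t_0}(M^+)$ and $M^-$ agree along their common boundary arc in $P$ so that the boundary maximum principle genuinely applies, since the presence of $\partial M$ is exactly where the Sol setting departs from the classical closed-surface argument.
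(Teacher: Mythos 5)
Your overall scheme---moving the leaves $Q_t$ down from $t=y_{\max}$, analysing the first contact, excluding $t_0>0$ because invariance of $\Gamma$ under both $\sigma_0$ and $\sigma_{t_0}$ would make $\Gamma$ invariant under the nontrivial translation $T_{2,-2t_0}$, matching boundaries at $t=0$ via the bigraph hypothesis, and extracting the graph statement from the monotone descent---is exactly the Alexandrov reflection argument the paper has in mind (the paper states this theorem with no written proof, as a direct consequence of the method it sketches). However, two steps need repair, and you flagged the first yourself without resolving it. Your list (a)--(c) of first-contact events is not exhaustive: a contact could a priori occur between an \emph{interior} point of one of the two pieces and a \emph{boundary} point of the other, and at such a point no version of the maximum principle applies, so the descent could stop at some $t_0>0$ with no conclusion. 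Excluding this is the actual role of hypothesis (1), not the vague ``well-defined region'' role you assign to it: by the Remark following the theorem, $M\setminus\partial M$ lies strictly in $\{x>0\}$ while $\partial M\subset P=\{x=0\}$, and every $\sigma_t$ preserves the coordinate $x$, so a reflected interior point can never meet $\Gamma$ and a reflected boundary point can never meet an interior point of $M$. Similarly, hypothesis (2) eliminates boundary--boundary contacts off the mirror for $t>0$: writing the two halves of $\Gamma$ as $y=\pm g(z)$ over $l$, the reflected point $(0,2t-g(z),z)$ lies on $\Gamma$ only if $g(z)=t$ or $t=0$. (Also, your (c) is misstated: the cap ceases to be a graph when $M$ becomes \emph{orthogonal} to $Q_t$ at a hinge point, not tangent to it, and the tool there is the Hopf boundary lemma, i.e.\ your case (b), not the interior maximum principle.)

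The second gap is the conclusion at $t=0$, where you invoke a false principle. That $\sigma_0(M^+(0))$ and $M^-(0)$ have the same $H$, share the boundary $\Gamma^-\cup(M\cap Q)$, and that one lies weakly on one side of the other does \emph{not} force coincidence: the two pieces are hinged along $M\cap Q$, where they are graphs over complementary domains of $Q$, so they may touch nowhere tangentially and neither the interior maximum principle nor the Hopf lemma has a point at which to act. (Compare the two caps cut from one round sphere in Euclidean space by a plane: same $|H|$, same boundary circle, one weakly on one side of the other, yet distinct.) The standard repair---the one the paper itself uses at the end of the proof of Theorem \ref{t-trans}---is to run the moving planes a second time, coming up from $t=-\infty$. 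The first sweep shows that along each flow line of $\partial/\partial y$ every point of $M^-(0)$ lies at height $\le -a$, where $a\ge 0$ is the height of the unique point of $M^+(0)$ on that line; the second sweep gives the mirror statement; combining the two orderings (together with the parity of intersections of flow lines with the closed surface $M\cup D$, again available by hypothesis (1)) forces the heights on every flow line to be exactly opposite, i.e. $\sigma_0(M^+(0))=M^-(0)$. With these two repairs your argument, including the final statement that $Q$ splits $M$ into two symmetric graphs, is complete and coincides with the intended proof.
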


As conclusion the surface is a symmetric bigraph with respect to the $y$-direction.

\begin{remark}
When we say that $M$ lies in one side of $P$ we mean that $M\subset P^+=\{(x,y,z);x\geq 0\}$ and $M-\partial M\subset
\{(x,y,z);x>0\}$. In fact, it is impossible that $M\subset P^+$ and $M\cap P$ contains interior points of $M$: by comparison $M$ with $P$ at these points, we would have $H\equiv 0$, and $M\subset P$.
\end{remark}

A similar result is obtained if the boundary is included in the horizontal plane $R$. The hypothesis are similar and we only have to give the concept of symmetric bigraph since there exist two types of such curves in the plane $R$. So, we say that a closed curve $\Gamma$ included in the plane $R$ is  a symmetric bigraph with respect to the $x$-direction (resp. the $y$-direction) if $\Gamma$ is symmetric with respect to the reflection across
 the plane $P$ (resp. $Q$)  and each one of the two components of $\Gamma$ divided by  $P$ (resp. $Q$)   is a graph on $P\cap R$ (resp. $Q\cap R$).

Due to Theorem \ref{t-alex}, one asks by those conditions that ensure that the surface lies in one side of the plane containing the boundary. The next result holds for surfaces with non-constant mean curvature (see \cite{ko} for the Euclidean version) and it uses the fact that the foliations ${\cal F}_i$ of the ambient space are minimal surfaces.

\begin{theorem} Let  $M$ be a compact  embedded surface in Sol whose boundary $\Gamma$ lies in the plane $P$.
Denote by $D\subset P$ the bounded domain by $\Gamma$. Assume
\begin{enumerate}
\item The mean curvature function $H$ does not vanish.
\item The surface $M$ does not intersect $ext(D)$.
\end{enumerate}
Then $M$ lies in one of the half-spaces of $\r^3$ determined by $P$.
An analogous result is obtained by replacing $P$ by $R$.
\end{theorem}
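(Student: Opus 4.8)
The plan is to run the Alexandrov reflection (moving-planes) method using the foliation $\mathcal F_1=\{P_t\}$, exploiting two features recorded earlier: each leaf $P_t$ is totally geodesic, hence minimal, so it is an admissible comparison surface for the maximum principle; and the reflection $\sigma_t(x,y,z)=(2t-x,y,z)$ across $P_t$ is an isometry of Sol, since it fixes $y,z$ and leaves $dx^2$ invariant. I argue by contradiction: suppose $M$ meets both open half-spaces, so that $t_+=\max_M x>0$ and $t_-=\min_M x<0$. Because $\partial M=\Gamma\subset\{x=0\}$, both extrema are interior, and comparing $M$ at the topmost point with the minimal plane $P_{t_+}$ (and at the lowest point with $P_{t_-}$) the maximum principle shows that the mean curvature vector points into the slab $\{t_-\le x\le t_+\}$ there. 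Since $M$ is connected and $H$ never vanishes, the scalar $H$ has a fixed sign, which orients the reflection coherently.

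First I slide $P_t$ downwards from $t=+\infty$, reflecting the cap $M_t^+=M\cap\{x\ge t\}$ to $\sigma_t(M_t^+)\subset\{x\le t\}$ and comparing it with $M\cap\{x\le t\}$. By the maximum principle the reflected cap stays on the correct side of $M$ until a first value $t_0<t_+$ at which either an interior tangency of $\sigma_{t_0}(M_{t_0}^+)$ with $M$ occurs, or $P_{t_0}$ becomes tangent to $M$ at a point of $M\cap P_{t_0}$. In either case the tangency principle forces $\sigma_{t_0}(M)=M$, so $\Gamma$ is invariant under reflection across $\{x=t_0\}$; but $\sigma_{t_0}$ sends the nonempty set $\Gamma\subset\{x=0\}$ onto $\{x=2t_0\}$, so invariance is possible only if $t_0=0$. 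Hence no stopping occurs for $t\in(0,t_+)$ and the reflection can be carried down to the plane $P$ itself.

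Hypothesis (2) is what keeps the boundaries of the caps from interfering during this process: the boundary of $M_t^+$ is the interior curve $M\cap P_t$, and since $M\cap P\subset\overline D$ there is no contact of caps along $\Gamma$ for $t>0$, so the only admissible stop is an interior tangency, already excluded above. Letting $t\to 0^+$ (and running the mirror argument from $t=-\infty$ upwards) I obtain that $\sigma_0$ maps the upper part of $M$ into the closed region bounded by the lower part and $D$, and conversely; by embeddedness this forces either one of the two caps to be empty, which is exactly the assertion that $M$ lies in a single half-space, or $M=\sigma_0(M)$. The symmetric alternative would then have to be ruled out using (1) and (2): such an $M$ would meet $P$ along interior curves of $\overline D$ and be orthogonal to $P$ there, and a final comparison with the minimal plane $P$ would contradict $H\neq0$. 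The statement for $R$ is identical, replacing $\mathcal F_1$ by the minimal foliation $\mathcal F_3$.

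The step I expect to be the main obstacle is precisely the control of the caps' free boundaries together with the exclusion of the purely symmetric configuration: one must convert the hypothesis $M\cap ext(D)=\emptyset$ into the quantitative statement that the boundaries $M\cap P_t$ of the upper and lower caps never meet for $t>0$, and then rule out that $M$ crosses $P$ transversally along an interior curve of $\overline D$. This is where the interplay between embeddedness, the non-vanishing of $H$, and the confinement of $M\cap P$ to $\overline D$ must be used most carefully.
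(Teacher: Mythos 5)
Your proposal rests on the Alexandrov moving-planes method, and that is precisely where it breaks down: the theorem does \emph{not} assume that $M$ has constant mean curvature, only that the mean curvature \emph{function} $H$ never vanishes (the paper stresses that this result, following Koiso's Euclidean version, holds for non-constant $H$). The tangency (maximum) principle you invoke when the reflected cap $\sigma_{t_0}(M_{t_0}^+)$ touches $M$ compares two surfaces whose mean curvatures at the contact point are $H(p)$ and $H(\sigma_{t_0}(p))$; for non-constant $H$ these are unrelated, so no conclusion (" $\sigma_{t_0}(M)=M$ ") can be drawn, and the whole reflection scheme collapses. A fixed sign of $H$, which is all that connectedness and hypothesis (1) give you, is not enough to ``orient the reflection coherently''. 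Moreover, even if one added the CMC hypothesis, your endgame (ruling out the symmetric alternative $M=\sigma_0(M)$ and converting $M\cap ext(D)=\emptyset$ into control of the caps' free boundaries) is left as an admitted gap rather than proved.

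The paper's actual argument is different and avoids reflections entirely; its germ is the observation you make in your first paragraph about the extremal points. Assuming for contradiction that $M$ meets both open half-spaces, one uses hypothesis (2) and embeddedness to cap off $M$: take a large Euclidean half-sphere $S$ below $P$ with $\partial S\subset P$ and the planar annulus $A\subset P$ bounded by $\partial S$ and $\Gamma$ (disjoint from $M$ precisely because $M\cap ext(D)=\emptyset$), so that $M^*=S\cup A\cup M$ is a closed embedded surface enclosing a domain $W$. Orient $M$ by the normal $N$ with $H>0$, which is possible by hypothesis (1) and connectedness. At the highest point $p$ (in the $x$-direction), comparison with the minimal leaf $P_{x(p)}$ forces $N(p)$ to point downward, hence into $W$; since $M^*$ is closed and embedded, $N$ then points into $W$ along all of $M$, so at the lowest point $q$ the vector $N(q)$ also points into $W$, i.e.\ downward again. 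But placing the minimal leaf $P_{x(q)}$ at $q$, with $M$ lying entirely on one side of it, the maximum principle is violated: $M$ would have positive mean curvature with respect to the downward normal while the leaf is minimal. This contradiction proves $M$ lies in one half-space, using only one-sided comparisons with minimal leaves and the global orientation supplied by embeddedness --- no step requires $H$ to be constant.
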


\begin{proof} Without loss of generality, we assume that $M$ has points with positive $x$-coordinate. Then we show that $M\subset P^+$. We have to show that $M\cap D=\emptyset$. By contradiction, we
assume that $M\cap D\not=\emptyset$ and thus  $M$ has points in both sides of $P$.

Consider a sufficiently big Euclidean half-sphere $S$ included in $\r^3-P^+$, with $\partial S\subset P$ and such that $S$ together
the annulus $A\subset P$ bounded by $\partial S$ and $\Gamma$, and
the very surface $M$ forme a closed embedded  surface $M^*=S\cup
A\cup M$. The surface $M^*$ is not smooth along the curves $\partial A$, but this has no effect in the reasoning. Denote $W\subset\r^3$ the enclosed domain by $M^*$. We choose an orientation $N$ on $M$
 so the mean curvature $H$ is positive. We now study if $N$ points to $W$.

  Let $p$ be the highest point (with respect to $x$-direction) and let $x(p)$ be its $x$-coordinate. Comparing $M$ with the minimal surface $P_{x(p)}$, and using the maximum principle, the vector  $N(p)$ points down, and so, towards $W$.  On the other hand, in the lowest point $q\in M$ and because the vector $N(q)$  points to $W$, then
   $N(q)$ points down again.
 Let us place at $q$ the minimal surface $P_{x(q)}$. Then the maximum principle yields a contradiction since $P_{x(q)}$ lies in one side of $M$ around $q$, but  $P_{x(q)}$ is a minimal surface and the mean curvature of $M$ is positive. This contradiction concludes the proof of Theorem.
\end{proof}

The second result establishes conditions for a CMC surface to be a graph on $P$. We precise in this moment the notion of (Killing) graph in Sol. Given a domain $D\subset P$ and $\frac{\partial}{\partial x}$ the Killing vector field orthogonal to $P$, denote $\Psi:\r\times P\rightarrow Sol$ the flux generated by $\frac{\partial}{\partial x}$. Then the graph of a function $u$ on $D$ is the surface $\{\Psi(u(p)),p);p\in D\}$. As the flow lines of $\frac{\partial}{\partial x}$ are horizontal straight-lines orthogonal to $P$, a graph on $D$ coincides with the concept of graph from the Euclidean viewpoint.

\begin{theorem} Let $M$ be a compact  embedded CMC  surface bounded by a closed curved contained in the plane $P$. Denote by $D$ the bounded domain by $\partial M$ in $P$.
Assume that $M$ lies in one side of $P$ and that it is a graph on $D$ around  $\Gamma$. Then  $M$ is a graph on $D$.
\end{theorem}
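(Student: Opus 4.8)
The plan is to run the Alexandrov reflection method of Section~\ref{s-emb}, but now reflecting from above the surface and using the reflections to forbid a horizontal line from meeting $M$ twice, rather than to produce a symmetry. Recall that the leaves $P_t=\{x=t\}$ of $\mathcal{F}_1$ are totally geodesic and that the Euclidean reflection $\sigma_t(x,y,z)=(2t-x,y,z)$ across $P_t$ is an isometry of Sol (the metric is even in $dx$); hence $\sigma_t$ carries a CMC surface of mean curvature $H$ to a CMC surface with the same $H$. Normalize so that $M\subset P^+=\{x\ge 0\}$ with $\partial M=\Gamma\subset P=P_0$, and orient $M$ so that $H\ge 0$. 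Since $M$ is compact, $x_{\max}=\max_{M}x$ is attained, and the flow lines of $\partial/\partial x$ are the horizontal Euclidean lines $\{(s,y,z):s\in\r\}$; thus ``$M$ is a graph on $D$'' is equivalent to each such line meeting $M$ exactly once over each point of $D$.

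For $t<x_{\max}$ set $M^+(t)=M\cap\{x>t\}$, $M^-(t)=M\cap\{x<t\}$, and $M^+(t)^*=\sigma_t(M^+(t))\subset\{x<t\}$. I would first check that for $t$ slightly below $x_{\max}$ the cap $M^+(t)$ is a small $x$-graph whose reflection $M^+(t)^*$ lies strictly on the $M^-(t)$ side and does not meet $M$ in the interior. Then I would decrease $t$ and propagate the property
$$(\star_t):\quad M^+(t)\ \text{is a graph in the }x\text{-direction and }M^+(t)^*\ \text{stays below }M^-(t),$$
which, by the interior and boundary (Hopf) maximum principles recalled in Section~\ref{s-emb}, persists as long as no first contact occurs. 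Hence either $(\star_t)$ holds for all $t\in(0,x_{\max})$, or it fails first at some $t_0\in[0,x_{\max})$.

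The heart of the argument is to exclude a failure at $t_0>0$ via the two standard Alexandrov alternatives. In the first, $M^+(t_0)^*$ touches $M^-(t_0)$ at a common interior point; since both surfaces are CMC with the same $H$ and one lies on one side of the other, the maximum principle forces $M$ to be invariant under $\sigma_{t_0}$. This is impossible: $\sigma_{t_0}(\Gamma)\subset P_{2t_0}$ with $2t_0\ne 0$, so $\sigma_{t_0}$ cannot fix $\partial M=\Gamma\subset P_0$. In the second alternative $M$ becomes tangent to $P_{t_0}$ at a point $p$ of the equator $M\cap P_{t_0}$, i.e. $N(p)=\pm E_1$; applying the Hopf boundary maximum principle to $M$ and $M^+(t_0)^*$ at $p$ again yields the same forbidden symmetry. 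Therefore no failure occurs for $t_0>0$, and the reflection continues down to $t_0=0$.

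Finally I would let $t\downarrow 0$. The hypothesis that $M$ is a graph around $\Gamma$ guarantees that $(\star_t)$ cannot break down by a boundary tangency as $t\to 0^+$: in a collar of $\Gamma$ the surface is already graphical and contained in $\{x\ge 0\}$, so the reflected caps remain graphs there. Passing to the limit, $M^+(0)=M\setminus\Gamma$ is an $x$-graph, so $\langle N,\partial/\partial x\rangle$ never vanishes in the interior and no horizontal line meets the interior of $M$ twice. A degree and connectedness argument then finishes: the projection $\pi(x,y,z)=(0,y,z)$ is a proper local diffeomorphism on $M$ that is a graph over a collar of $\partial D$ and injective in the interior, whence $\pi(M)=D$ and $\pi|_M$ is a bijection onto $D$, so $M$ is a graph on $D$. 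I expect the main obstacle to be exactly this boundary analysis: the careful use of the Hopf boundary maximum principle at equator points in alternative~(b), and the verification that the ``graph around $\Gamma$'' hypothesis precisely prevents the reflection from stalling near $\partial M$ as $t\to 0^+$.
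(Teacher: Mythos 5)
Your proposal is correct and follows essentially the same route as the paper: the Alexandrov reflection method with the totally geodesic planes $P_t$ coming down from infinity, where a first interior (or equator, via the Hopf boundary point lemma) contact at some $t_1>0$ would force $P_{t_1}$ to be a plane of symmetry of $M$, contradicting $\partial M=\Gamma\subset P_0$ lying strictly on one side of $P_{t_1}$, while the ``graph around $\Gamma$'' hypothesis rules out contacts at boundary points. Your write-up is simply a more detailed version of the paper's argument, spelling out the two tangency alternatives and the final projection step that the paper leaves implicit.
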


\begin{proof} Suppose that $M$ lies in the half-space $P^+$. We use the Alexandrov method with reflections across the planes $P_t$ coming from infinity. Let us remark that $M$ together $D$ encloses a domain of $\r^3$.  If $t$ is sufficiently big, $P_t$ does not intersect $M$. Moving  $P_t$ towards $P$, that is $t\searrow 0$, we arrive until the first contact point with $M$ at $t=t_0$. Next, we take $P_t$ for $t<t_0$ and let reflect the part of $M$ on the upper-side of $P_t$ until the next contact of $M$ with itself at
$t=t_1$. If this occurs for the time $t_1=0$, we have proved that $M$ is a graph on $P$ and this finishes the proof. On the contrary, if $t_1>0$ the hypothesis about the boundary asserts that the contact occurs between interior points of $M$. The maximum principle would yield that $P_{t_1}$ is a plane of symmetry of $M$,  with $D$ in one side of the plane $P_{t_1}$: contradiction.

\end{proof}

One can easily extend the result changing the hypothesis $M\subset P^+$ by the fact that $M$ does not intersect the half-cylinder $\{(t,y,z);(0,y,z)\in\Gamma, t<0\}$.

\section{Applications of the flux formula}\label{s-app}

In this section we combine the flux formula  together the maximum principle to obtain results about configurations of compact  embedded CMC surfaces of Sol.

\begin{theorem}\label{t-trans} Let $\Gamma$ be a  symmetric bigraph contained in $P$ and denote by $D\subset P$ the bounded domain by $\Gamma$. Let $M$ be a compact  embedded CMC surface with boundary $\Gamma$. If $M$
is transverse to $P$ along $\Gamma$ and $M\cap D=\emptyset$, then $M$  is
invariant by the reflections across $Q$.
\end{theorem}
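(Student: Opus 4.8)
The plan is to prove the symmetry by a direct Alexandrov reflection across the foliation $\{Q_t\}_{t\in\r}$ of ${\cal F}_2$, whose leaves are totally geodesic; hence each reflection $\sigma_t(x,y,z)=(x,2t-y,z)$ across $Q_t$ is an isometry of Sol, it preserves the plane $P=\{x=0\}$ together with its two sides $\{x>0\}$ and $\{x<0\}$, and $\sigma_0$ fixes $\Gamma$ precisely because $\Gamma$ is a symmetric bigraph. I would first record what transversality buys us: by the maximum-principle remark preceding Theorem \ref{t-alex}, a minimal surface with boundary in $P$ is contained in $P$, so the transversal crossing of $M$ and $P$ along $\Gamma$ forces $M\neq D$ and $H\neq 0$, and it tells us that $M$ lies on both sides of $P$ near $\Gamma$. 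This two-sidedness is harmless here, since each $\sigma_t$ acts only in the $y$-direction and the whole comparison takes place in Sol without reference to $P$.

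Next I would run the reflection in the usual way. Since $M$ is compact, $Q_t\cap M=\emptyset$ for $t$ large; decreasing $t$ I reflect the cap $M_t^+:=M\cap\{y\ge t\}$ to $\sigma_t(M_t^+)$ and compare it with $M_t^-:=M\cap\{y\le t\}$. Let $t^\ast\ge 0$ be the first value at which $\sigma_t(M_t^+)$ makes contact with $M_t^-$. The goal is to show $t^\ast=0$, for then $\sigma_0(M_0^+)$ and $M_0^-$ share the boundary $\Gamma$ and are tangent, and the maximum principle (in its boundary version) gives $\sigma_0(M_0^+)=M_0^-$, that is, $M$ is invariant under reflection across $Q$.

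The crux is to rule out $t^\ast>0$, and this is exactly where the two hypotheses enter. Write the two halves of the bigraph as $\Gamma^+=\{(0,f(z),z):z\in I\}$ and $\Gamma^-=\{(0,-f(z),z):z\in I\}$ with $f\ge 0$; here the graph-over-$l$ condition provides the single-valued $f$ and the symmetry across $Q$ provides the sign reversal. For $t>0$ the reflected boundary arc is $\{(0,2t-f(z),z):f(z)>t\}$, and from $0<t<f(z)$ one gets $-f(z)<2t-f(z)<f(z)$, so this arc lies in the \emph{open} domain $D$. Because $M\cap D=\emptyset$, the reflected boundary can never meet $M_t^-$; hence a contact at $t^\ast>0$ cannot be a boundary contact and must be an interior tangency. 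By the maximum principle an interior tangency makes $Q_{t^\ast}$ a plane of symmetry of $M$, and therefore of $\Gamma$; but reflection about $y=t$ sends $y=f(z)$ to $y=2t-f(z)$, which equals $-f(z)$ only for $t=0$, so $\Gamma$ is symmetric across no $Q_t$ with $t\neq 0$. This contradiction forces $t^\ast=0$ and proves the statement.

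I expect the main obstacle to lie in the bookkeeping at the stopping value $t^\ast$: one must verify that the reflected cap stays a graph over $Q_{t^\ast}$ so that the interior and boundary maximum principles genuinely apply, and that the first contact is an honest tangency rather than an artifact of the non-smooth locus $M\cap P$ created by the transversal crossing. The conceptual content, however, is simple: transversality guarantees a genuine surface distinct from the trivial disk $D$, while $M\cap D=\emptyset$ is exactly the condition that sweeps every reflected boundary arc into $D$ and thereby removes the only boundary obstruction to carrying the reflection all the way down to $Q$.
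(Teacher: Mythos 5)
There is a genuine gap, and it is precisely the case that the paper's own proof devotes its main machinery to. In your analysis of a first contact at $t^\ast>0$ you rule out only the contacts involving the \emph{reflected} boundary arc: your observation that this arc falls into the open domain $D$, hence misses $M$ because $M\cap D=\emptyset$, is correct and matches the paper. But there is a second boundary case you never consider: an \emph{interior} point of the reflected cap $\sigma_{t^\ast}(M_{t^\ast}^+)$ hitting a \emph{boundary} point of $M_{t^\ast}^-$, i.e.\ a point of $\Gamma$ with $y<t^\ast$. Since $\sigma_{t^\ast}$ preserves $P$, this happens exactly when $M$ has interior points on $P$ lying in $ext(D)$ with $y>t^\ast$ --- a configuration that none of the hypotheses forbids ($M$ may cross $P$ in $ext(D)$ and wrap around below it). At such a contact the maximum principle cannot be invoked, because one of the two surfaces has boundary there and the other does not; so your dichotomy ``boundary contact impossible, hence interior tangency'' fails, and the reflection cannot be carried down to $Q$. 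The paper eliminates exactly this scenario in two steps: first, Alexandrov reflection across the $Q_t$ shows that $M\cap ext(D)$ has at most one curve homotopic to $\Gamma$ in $ext(D)$; second, assuming such a curve exists, the flux formula with the Killing field $X=\frac{\partial}{\partial x}$ (orienting $M$ so that $H>0$, which forces $N$ to point into the domain $W$ bounded by $M\cup D$, whence $\langle\eta,\frac{\partial}{\partial x}\rangle<0$ on $D$ while $\langle\nu,\frac{\partial}{\partial x}\rangle>0$ along $\partial M$) yields the contradiction $0<\int_{\partial M}\langle\nu,\frac{\partial}{\partial x}\rangle=2H\int_D\langle\eta,\frac{\partial}{\partial x}\rangle<0$. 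Only after this exclusion can one assert that the first contact is never between an interior point and a boundary point of $M$. Your proposal contains no substitute for this step.

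A secondary but related error: you assert that transversality tells us ``$M$ lies on both sides of $P$ near $\Gamma$.'' This is backwards. Since $\Gamma=\partial M$, near each boundary point $M$ is a half-disk leaving $P$ transversally, hence lies locally on \emph{one} side of $P$; by connectedness of $\Gamma$ the side is the same along all of $\Gamma$, which is the paper's opening normalization (WLOG $M\subset P^+$ near $\Gamma$). This one-sidedness is not a technicality: it is what fixes the sign $\langle\nu,\frac{\partial}{\partial x}\rangle>0$ in the flux inequality above, so your misreading also removes the very ingredient needed to repair the gap.
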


This result follows ideas in \cite{bemr} for CMC surfaces in Euclidean space. However, and due to the
lack of isometries in the ambient space, the statement of our result is stronger that its Euclidean version.

\begin{proof}
 Without loss of
generality, we can assume that in a neighbourhood of $\Gamma$, $M$
is included in the half-space $P^+$.  Denote by $W\subset\r^3$ the bounded domain by $M\cup D$.
We claim that the intersection $M\cap ext(D)$ can not have two or more  curves homotopic in $ext(D)$ to
$\Gamma$. This is showed with  the Alexandrov reflection method  using reflection across the planes $Q_t$: the fact that the number of components is greater than one assures the existence of a contact interior point at a time $t_1>0$.  Then the plane   $Q_{t_1}$ would be a plane of symmetry of $M$ which is a contradiction with the fact that $\Gamma$ lies in one side of $Q_{t_1}$.

Therefore, $M\cap ext(D)$ has at most one component homotopic in $ext(D)$ to $\Gamma$. Assume that such component $C$ do exist. In order to use the flux formula, we  orient $M$ so  the mean curvature $H$ is positive. Then the corresponding Gauss map $N$  points into $W$. In particular, along $\Gamma$, the vector $N$ points towards $ext(D)$. Now we use the flux formula  with the Killing vector field $\frac{\partial}{\partial x}$. By the orientations chosen in Lemma \ref{l-flux}, we have that the unit vector $\eta$ orthogonal to $D$ is $\eta=-e^{-z}\frac{\partial}{\partial x}$ and so $\langle\eta,\frac{\partial}{\partial x}\rangle=-e^z <0$. Because the surface $M$ is contained in $P^+$ around $\partial M$,
we have  $\langle\nu,\frac{\partial}{\partial x}\rangle>0$ along $\partial M$. The flux formula (\ref{eq3}) gives
\begin{equation}\label{trans}
0<\int_{\partial M}\langle\nu,\frac{\partial}{\partial x}\rangle=2H\int_D\langle \eta,\frac{\partial}{\partial x}\rangle<0.
\end{equation}
This contradiction implies that $M\cap ext(D)$ has no components homotopic to $\Gamma$ in $ext(D)$.

We show that  $M$ is invariant with respect to $Q$, proving the result. This a direct consequence of the  Alexandrov reflection method with this planes coming from infinity. Suppose that the family $Q_t$ intersects $M$ for the first time at $t=t_0$. Continuing the movement of $Q_{t_0}$ by parallel translations doing
$t\searrow 0$, it would produce for some time $t_1<t_0$ a point of  contact of $M$ with the reflection of $M\cap (\cup_{t_1\leq t_0}Q_t\cap M)$ in $Q_{t_{1}}$. The fact that there are no components of $M\cap ext(D)$ homotopic to $\Gamma$  in $ext(D)$ together that $M\cap D=\emptyset$ assures that this contact does not occur between an interior point and  a boundary point of $M$. If this point is a smooth point of $M$,  the maximum principle  yields a plane of symmetry of $M$, in particular, a symmetry of $\Gamma$. Thus $t_1=0$ and this proves the result.
 If the contact occurs for a boundary point, the fact that $\Gamma$ is symmetric bigraph implies that $t_1=0$. Now, we repeat the reasoning but with the planes $Q_t$ coming from $t=-\infty$. Then we show again that we can arrive until the position $t=0$, showing in fact that $Q$ is a plane of symmetry of $M$.
\end{proof}

\begin{remark} In contrast to the Euclidean version, the surface $M\cap ext(D)$ could have components
 nulhomotopic in $ext(D)$. If this would be, the same proof shows that such components are symmetric bigraphs and their interiors are mutually disjoint.
 \end{remark}

\begin{remark} We are not able to extend Theorem \ref{t-trans} to the case that $\Gamma$ is included in the horizontal plane $R$. The  step in (\ref{trans}) does not work here. For this, we take as Killing vector field $X=-x\frac{\partial}{\partial x}+y\frac{\partial}{\partial y}+\frac{\partial}{\partial z}$. Assuming
that  $M$ lies in $R^+$ around $\partial M$,  $\eta=-E_3$ and so $\langle\eta,X\rangle=-1 <0$. But we can not control the sign of $\langle\nu,X\rangle$ since
$\langle\nu,X\rangle=-x\nu_1+y\nu_2+\nu_3$, where $\nu_i$ are the coordinates with respect to the basis
$\{\frac{\partial}{\partial x},\frac{\partial}{\partial y},\frac{\partial}{\partial z}\}$.  The fact that $M$ lies in $R^+$ means $\nu_3>0$. Even in the simplest case of $\Gamma$, that is, $\alpha$ is a circle,  $\alpha(s)=(\cos(s),\sin(s),0)$, we
have $\langle\nu,X\rangle=-\mu(s)\cos(2s)+\nu_3(s)$ for a certain function $\mu$, with $\mu(s)^2+\nu_3(s)^2=1$.
\end{remark}

Given a domain $D$ of $P$, denote $Cyl(D)$ the Killing cylinder determined by $D$, that is,
$Cyl(D)=\r\times D$.

\begin{theorem} Let $\Gamma$ be a Jordan curve in $P$ enclosing a domain $D$.  Assume that $M$ and $G$ are two  compact surfaces spanning $\Gamma$ with constant mean curvatures $H_1$ and $H_2$ respectively  and contained both in $Cyl(D)$. If $|H_1|=|H_2|$ and $G$ is a graph on $D$, then $M=G$ of $M=G^*$, the reflection of $G$ across to $P$.
\end{theorem}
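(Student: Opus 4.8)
The plan is to run the maximum principle through a sliding argument, exploiting that the translates of the graph $G$ along the Killing field $\frac{\partial}{\partial x}$ foliate the solid cylinder $Cyl(D)$. First I would dispose of the minimal case: if $H_1=0$ then $H_2=0$, and by the observation following the statement of the maximum principle in Section \ref{s-emb} (a minimal surface with boundary in $P$ is a subdomain of $P$), both $M$ and $G$ coincide with $D$, so $M=G$. Hence assume $H:=|H_1|=|H_2|>0$. Since each $T_{1,t}$ is an isometry of Sol, the translates $G_t:=T_{1,t}(G)=\{(u(p)+t,p):p\in D\}$, $t\in\r$, form a family of pairwise disjoint CMC graphs over $D$, each with $|H|=H$, foliating the open solid cylinder $\r\times\mathrm{int}(D)$; every interior point $q$ of $Cyl(D)$ lies on the unique leaf $G_{t(q)}$ with $t(q)=x(q)-u(\pi(q))$, where $\pi$ is the projection onto $P$. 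Because $u\equiv 0$ on $\partial D$ and $x\equiv 0$ on $\Gamma$, the function $t$ vanishes identically on $\Gamma=\partial M$, so $M$ meets the leaf $G_0=G$ exactly along its boundary.

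Orient $M$ by its mean curvature vector and suppose first that this orientation is coherent with the foliation $\{G_t\}$, in the sense that the mean curvature vectors of $M$ and of the leaves agree at any common tangency. Since $M\subset Cyl(D)$ is compact, $t|_M$ attains a maximum $t^+\geq 0$ and a minimum $t^-\leq 0$. If $t^+>0$, the maximizing point $q^+$ is interior to $M$, where $M$ is tangent to $G_{t^+}$ and lies locally on the side $\{t\leq t^+\}$; the interior maximum principle then forces $M$ and $G_{t^+}$ to coincide on an open set. As CMC surfaces are real-analytic and $M$ is connected, this gives $M\subset G_{t^+}$, which is absurd since $\partial M=\Gamma\subset P_0$ while $\partial G_{t^+}\subset P_{t^+}$ with $t^+>0$. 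Hence $t^+=0$, and the symmetric argument sliding the leaves up from $t=-\infty$ gives $t^-=0$. Therefore $t\equiv 0$ on $M$, i.e.\ $M\subset G$, and sharing the boundary $\Gamma$ forces $M=G$.

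It remains the case in which the orientation of the mean curvature vector of $M$ is incompatible with $\{G_t\}$, so the comparison above cannot be applied. Here I would repeat the entire argument verbatim with the reflected surface $G^*$ in place of $G$. Reflection across $P$ is the isometry $(x,y,z)\mapsto(-x,y,z)$, so $G^*=\{(-u(p),p):p\in D\}$ is again a CMC graph over $D$ with $|H|=H$ and boundary $\Gamma$, and its mean curvature vector is the $P$-reflection of that of $G$; consequently its foliation $G^*_t=T_{1,t}(G^*)$ is oriented so as to be coherent with $M$. Sliding these leaves from $t=\pm\infty$ and arguing as before yields $M=G^*$. Since the hypothesis $|H_1|=|H_2|$ permits either sign, exactly one of the two foliations $\{G_t\}$, $\{G^*_t\}$ is orientation-compatible with $M$, which produces the dichotomy $M=G$ or $M=G^*$.

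The main obstacle is precisely this orientation bookkeeping: one must verify that the fixed mean curvature vector of $M$ is coherent with exactly one of the two foliations, so that the maximum principle is applied with the unit normals pointing to the same side. A secondary technical point is that an extremum of $t|_M$ could in principle be attained at an interior point of $M$ that projects onto $\partial D$, where the touching leaf carries its own boundary; ruling this out (or handling it) requires the boundary Hopf form of the maximum principle rather than the interior one.
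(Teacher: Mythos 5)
Your opening moves coincide with the paper's: the $H=0$ case via the maximum principle, and sliding the translates $G_t=T_{1,t}(G)$, which foliate $Cyl(D)$, against $M$. But the argument breaks at the claimed dichotomy that \emph{exactly one} of the two foliations $\{G_t\}$, $\{G^*_t\}$ is orientation-compatible with $M$. Compatibility is a pointwise condition at each tangency, and the tangencies produced by sliding occur at \emph{different} points of $M$ (the maximum of $t|_M$ and the minimum of $t|_M$), where the unit normal of $M$ may point to opposite sides of the respective leaves. Normalize $G\subset P^+$ as the paper does; comparing $G$ with the minimal leaves $P_t$ at its highest point forces the mean curvature vector of $G$ (hence of every leaf $G_t$) to point towards $D$, and that of every leaf $G^*_t$ to point away from $D$. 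Now consider a hypothetical $M$ lying strictly between $G^*$ and $G$ whose mean curvature vector points into the region enclosed by $M\cup D$ (a ``lens''). At the interior minimum of $t|_M$, $M$ touches a leaf $G_{t^-}$ from above with the two mean curvature vectors \emph{opposed}, so the comparison principle gives nothing --- this tangency configuration is not pathological: $M=G^*$ itself touches the leaf $G_{-2\max u}$ in exactly this way, which is why sliding $\{G_t\}$ upward from $t=-\infty$ must be allowed to stall. Symmetrically, at the interior maximum of $t^*|_M$ the lens touches a leaf $G^*_{t^{*+}}$ from below, again with opposed mean curvature vectors. Such an $M$ is therefore coherent with \emph{neither} foliation; your two cases do not cover it, and no sliding argument rules it out.

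This strictly sandwiched configuration is precisely what the paper's proof must kill, and it does so with an ingredient your proposal never invokes: the flux formula (Lemma \ref{l-flux}). The paper's sliding step only yields the trichotomy $M=G$, $M=G^*$, or $M$ contained in the region bounded by $G\cup G^*$; in the third case, being sandwiched gives the strict inequality $|\langle\nu_M,\frac{\partial}{\partial x}\rangle|<\langle\nu_G,\frac{\partial}{\partial x}\rangle$ between inner conormals along $\Gamma$, while (\ref{eq3}) applied with $X=\frac{\partial}{\partial x}$ to $M$ and to $G$ forces both boundary integrals to have the same absolute value $2|H_1|\,|\int_D\langle\eta,\frac{\partial}{\partial x}\rangle|$, a contradiction. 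This is why the theorem sits in Section \ref{s-app}: the maximum principle alone cannot close the argument, and your proof, which relies on sliding alone, has a genuine gap at exactly this point. (The boundary-tangency issue you flag, touchings over $\partial D$ where a leaf carries its boundary, is real but secondary; the missing flux step is the essential one.)
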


\begin{proof} The case $H=0$ is trivial since the maximum principle yields that both surfaces are the very domain $D$. Thus, we assume $H_i\not=0$. Using the maximum principle again, it is not difficult to show that $G$ lies in one side of $P$. We assume then $G\subset P^+$ (and so, $G^*\subset P^-$). We consider the orientation on $G$ such that $H_2>0$, that is, the orientation that points down (with respect to the $x$-direction).

By combining the maximum principle together translations of $G$ and $G^*$ in the $x$-direction, one proves that either $M$ coincides with $G$ or $G^*$, showing the Theorem, or $M$ lies included in the bounded domain by $G\cup G^*$. Recall that $\partial M=\partial G=\partial G^*=\Gamma$. If we compare the inner conormal vectors
$\nu_M$ and $\nu_G$, the fact that $M$ is sandwiched by $G$ and $G^*$ writes as
$$\Big|\langle\nu_M,\frac{\partial}{\partial x}\rangle\Big|<\langle\nu_G,\frac{\partial}{\partial x}\rangle.$$
The flux formula (\ref{eq3}) for each surface gives
$$2|H_1|\Big|\int_D\langle \eta,\frac{\partial}{\partial x}\rangle\Big|=\Big|\int_{\partial M}\langle \nu_M,\frac{\partial}{\partial x}\rangle\Big|<\int_{\partial M}\langle \nu_G,\frac{\partial}{\partial x}\rangle
=2|H_1|\Big|\int_D\langle \eta,\frac{\partial}{\partial x}\rangle\Big|.$$
This contradiction shows the result.
\end{proof}

\begin{remark} Let $\Gamma$ be a closed curve in $P$. For the existence of graphs with constant mean curvature $H$ and boundary $\Gamma$ we can use the result established  in \cite{dhl}.  The Killing cylinder based in $\Gamma$ has mean curvature curvature $H_{cyl}=\theta'/2$, where  $\theta$
is the angle that appears in the parametrization by the arc-length of $\Gamma$ (see Appendix for local computations of the curvature). First we have to  assume that
$\inf_{Sol} Ric\geq-2\inf_{\Gamma} H_{cyl}^2$. In our case, the infimum of the Ricci tensor is $-2$ (\cite{dm}), the curvature of
$\alpha$ is $\kappa=\theta'+\cos\theta$. Assume that $\kappa>1$. Then
the condition writes as
$$-2\geq -2\inf_{\Gamma}\frac{\theta'^2}{4} \Rightarrow 4\leq \inf (\kappa-\cos\theta)^2.$$
For this it suffices that $4\leq (\kappa-1)^2$.
Then the condition for existence of graphs with constant mean curvature $H$ is that
$|H|\leq \inf_\Gamma H_{cyl}$, that is, $|H|\leq \inf(\kappa-\cos\theta)/2$. A sufficient condition is that
$|H|\leq (\kappa-1)/2$.
\end{remark}
\section{Appendix}

The Riemannian connection ${\nabla}$ of Sol with respect to $\{E_1,E_2,E_3\}$  is
\begin{equation}\label{cov}\begin{array}{lll}
{\nabla}_{E_1} E_1=-E_3 & {\nabla}_{E_1}E_2=0&{\nabla}_{E_1}E_3=E_1\\
{\nabla}_{E_2} E_1=0 & {\nabla}_{E_2}E_2=E_3&{\nabla}_{E_2}E_3=-E_2\\
{\nabla}_{E_3} E_1=0 & {\nabla}_{E_3}E_2=0&{\nabla}_{E_3}E_3=0\\
\end{array}
\end{equation}

Let $\alpha$ be a curve contained in the plane $P$ and we compute its (intrinsic) curvature in $P$. Since $P$ is a totally geodesic surface, this curvature coincides with the curvature $\kappa$ of $\alpha$ as a submanifolds of  Sol.
Assume that the parametrization of $\alpha$ is $\alpha(s)=(0,y(s),z(s))$, $s\in I$,  where $s$ is the arc-length parameter. Then
$$e^{-z(s)}y'(s)=\cos\theta(s),\hspace*{1cm}z'(s)=\sin\theta(s),$$
where $\theta=\theta(s)$ is a certain smooth function. This allows to write  $\alpha'(s)=\cos\theta E_2+\sin\theta E_3$. Taking into account (\ref{cov}), we have
\begin{eqnarray*}
\nabla_{\alpha'}\alpha'&=& -\theta'\sin\theta E_2+\theta'\cos\theta E_3+\cos\theta\nabla_{\alpha'}E_2+
\sin\theta\nabla_{\alpha'}E_3\\
&=&-\theta'\sin\theta E_2+\theta'\cos\theta E_3+\cos\theta(\cosh\theta E_3)+\sin\theta(-\cosh\theta)\\
&=&(\theta'+\cos\theta)(-\sin\theta E_2+\cos\theta E_3.
\end{eqnarray*}
Thus $\kappa=|\nabla_{\alpha'}\alpha'|=\theta'+\cos\theta$.

We compute the mean curvature of a Killing cylinder $S$ based in a planar curve $\alpha$ contained in $P$.
We parametrize $S$ by $X(s,t)=(t,y(s),z(s))$, $s\in I\subset\r$, $t\in\r$,
where $\alpha$ is parametrized by the arc-length.  We have
\begin{eqnarray*}
& &e_1:=X_s=(0,y',z')=\cos\theta E_2+\sin\theta E_3.\\
& &e_2:=X_t=(1,0,0)=e^{z}E_1.
\end{eqnarray*}
We choose as Gauss map $N=-\sin\theta E_2+\cos\theta E_3$. We know that
$$H=\frac12\frac{Eg-2Ff+Ge}{EG-F^2},$$
with
$$\begin{array}{lll}
 E=\langle e_1,e_1\rangle, &F=\langle e_1,e_2\rangle, &G=\langle e_2,e_2\rangle.\\
e=\langle N,\nabla_{e_1}e_1\rangle, & f=\langle N,\nabla_{e_1}e_2\rangle, &g=\langle N,\nabla_{e_2}e_2\rangle.
\end{array}$$
In our case, the coefficients of the first fundamental form are
$$E=1,\ F=0,\ G=e^{2z},$$
and $EG-F^2=e^{2z}$. The values of $\nabla_{e_i}e_j$ are
\begin{eqnarray*}
\nabla_{e_1}e_1&=&(\theta'+\cos\theta)(-\sin\theta E_2+\cos\theta E_3).\\
\nabla_{e_1}e_2&=&\nabla_{e_2}e_1=\sin\theta e^{z}E_1.\\
\nabla_{e_2}e_2&=&-e^{2z}E_3.
\end{eqnarray*}
 Then
$$e=\theta'+\cos\theta,\hspace*{.5cm} f=0,\hspace*{.5cm} g=-e^{2z}\cos\theta.$$
As conclusion, $H= \theta'/2$.


\end{document}